\newcommand{\alertm}[1]{%
  \marginpar{%
    \ifodd\value{page} \raggedright \else \raggedleft \fi
    \footnotesize{\textcolor{Green}{#1}}
  }
}
\definecolor{red}{rgb}{1,0,0}
\newtheorem{thm}{Theorem}[section]
\newtheorem{thmx}{Theorem} %non contato
\newtheoremstyle{lemma}{4mm}{1mm}{\itshape}{ }{\bfseries}{.}{ }{}
\newtheorem{prop}[thm]{Proposition}
\theoremstyle{definition}
\newtheorem{defn}[thm]{Definition}
\theoremstyle{remark}
\newtheorem{remark}[thm]{Remark}
\numberwithin{equation}{section}
\newcommand{\mathsym}[1]{{}}
\newcommand{\unicode}[1]{{}}
\def\R{\mathbb{R}}
\def\G{\mathfrak{G}}
\def\t{\mathfrak{t}}
\def\GL{\mathrm{GL}}
\def\O{\mathrm{O}}
\def\T{{^t\!}}
\def\k{\kappa}
\def\PP2{\mathcal{P}^2}
\def\G{\mathrm{G}}
\def\G1{\mathrm{G}^+_1}
\def\NR1{\mathrm{N}^+_1}
\begin{document}

%\tableofcontents
%\newpage

%\title[]{On transversal curves in Lie sphere geometry}
%\title[]{On a variational problem for transversal \\curves in Lie sphere geometry}
\title[]{On a variational problem for curves\\ in Lie sphere geometry}
%\title[]{Transversal curves in Lie sphere geometry}

\author{Lorenzo Nicolodi}
\address{(L. Nicolodi) Dipartimento di Scienze Matematiche, Fisiche e Informatiche, Universit\`a di Parma,
 Parco Area delle Scienze 53/A, I-43124 Parma, Italy}
\email{lorenzo.nicolodi@unipr.it}

\subjclass[2010]{53A40; 53A55; 58A17; 58E10; 58E25}

%\date{Version of \today}
%\date{Version of June 20, 2023}
%\date{Version of November 18, 2024}
\dedicatory{In memory of Professor Lieven Vanhecke (1939-2023)}

\keywords{Curves in Lie sphere geometry; Lie sphere invariants; linear control systems; 
geometric variational problems; Griffiths' formalism; exterior differential systems}

\begin{abstract}

Let $\Lambda$ be
%$T_1(S^3)$ 
the unit tangent bundle of the unit 3-sphere acted on transitively 
%by the group of contact transformations given 
by the contact group of Lie sphere transformations.
We study the Lie sphere geometry of generic curves 
in $\Lambda$ which are 
everywhere transversal to the contact distribution of $\Lambda$.
By the method of moving frames, we prove that such curves
can be parametrized by a 
%natural
Lie-invariant parameter,
the Lie arclength,
and that in this parametrization they are
uniquely determined, up to Lie sphere transformation,
by four local 
%Lie sphere geometry 
invariants, the Lie 
%sphere 
curvatures. 
We then consider the simplest Lie-invariant functional
%variational problem 
on generic transversal curves defined by integrating the differential of the Lie arclength.
The corresponding 
Euler--Lagrange equations are computed and
the critical curves are characterized in terms of their Lie curvatures.
In our discussion, we adopt Griffiths' exterior differential systems 
approach to the calculus of variations.

\end{abstract}

\maketitle

\section{Introduction}\label{s:Intro}

The unit tangent bundle $\Lambda$ 
%$T_1(S^3)$ 
of the unit 3-sphere $S^3$ is acted on transitively 
by the group of Lie sphere transformations, the \textit{Lie sphere group},
%by the group of contact transformations given 
a 
%particular tractable 
subgroup of the group
%the contact transformation group 
of all contact transformations 
of $\Lambda$.
The action 
of the Lie sphere group 
can be linearized by
identifying $\Lambda$ with the Grassmannian of isotropic 2-spaces in %$\mathbb{R}^{4,2}$, that is, 
%$\mathbb{R}^6$
pseudo-Euclidean space $\mathbb{R}^{4,2}$
%with a scalar product
%symmetric bilinear form 
%$h$ 
of signature $(4,2)$.
In this model, the pseudo-orthogonal group $\mathrm{O}(4,2)$ of $\mathbb{R}^{4,2}$ acts on 
$\Lambda$ by contact diffeomorphisms and provides a double cover of the Lie sphere group.
%group of Lie sphere transformations,
The Lie sphere group 
%which naturally 
acts in particular on the 
%space of 
curves of $\Lambda$ which are transversal to the contact distribution of $\Lambda$. 
This paper will investigate the invariant properties of
%Lie sphere invariant properties
transversal curves 
under this action
%in $\Lambda$ that remain invariant under the action of the Lie sphere group
%
and then discuss the critical points of the simplest Lie-invariant variational problem on such curves.
Motivations are provided by
the diffused interest in 
%invariant 
variational problems for curves in homogeneous spaces,
and more specifically in variational problems that are invariant under the 
Lie group
%symmetry group 
%Lie group of symmetries 
of the homogeneous space in question
\cite{EMN-JMAA,FGL,Gr,MN-CQG,MN-CAG},
by optimal control theory \cite{BGar,GM,MN-SIAM},
and by recent work on 
%the geometry of 
transversal curves in the CR 3-sphere \cite{CI,MNS-Kharkiv,MN-olver, MS}.
%%%%
%
In our discussion, we adopt Griffiths' approach to the calculus of variations. This approach,
introduced in 
the book \textit{Exterior differential systems and the calculus of variations}
\cite{Gr}, uses the theory of exterior differential systems, coupled with 
the method of moving frames, as a natural setting for studying variational problems
arising from geometry.
Other geometric applications of Griffiths' formalism can be found in
\cite{BG-ajm,DMN,GM,Hsu,JMN-PhysA,M1}.

%%%%%%%
\vskip0.1cm
Lie sphere geometry was introduced by Sophus Lie around 1870 \cite{Lie, Lie-Scheffers}
%and investigated by Fubini and Cartan at the beginning of the twentieth century.
and developed to a large extent during the 1920s, mainly by W. Blaschke and G. Thomsen \cite{Blaschke}.
Over the past decades, the subject of
Lie sphere geometry has received much attention especially in relation with its applications
to the study of Dupin submanifolds \cite{CC,CC2,Cec-Jen-98, Cec-Jen-00,Miyaoka-1989}, 
Lie applicable surfaces \cite{BH-JPR,MN-tohoku,Pember}, 
and the theory of integrable systems \cite{BH-J,Ferapontov,MN-jmp}. 
The intrinsic structure of Lie sphere geometry was studied
in the late 1980s and early 1990s
%at the end of the 1980s
by Tanaka's theory of graded Lie algebras and Cartan connections \cite{Miyaoka-1991, SY-1989, SY-1993}.
%Recently, 
Interestingly,
Lie sphere geometry has been recently applied to the study of scattering processes in nuclear physics \cite{Ulrych}.

\vskip0.1cm
The first purpose of this paper is to study 
the Lie sphere geometry 
of transversal curves in $\Lambda$.
In our discussion, $\Lambda$ is thought of as a homogeneous space of the identity component $G$ 
of $\mathrm{O}(4,2)$.
By the method of moving frames, for a \textit{generic}\footnote{To be specified in Section \ref{s:MF-for-TC}.} 
transversal curve $\gamma: J \to \Lambda$, 
defined an open interval $J$ of $\R$,
we will construct a canonical lift
$\mathfrak{A}: J\to [G]$ to the quotient Lie group $[G] = G/\{\pm I_6\}$, called the \textit{canonical Lie frame field}.
%$[G] = G/\mathbb{Z}_2$, where $\mathbb{Z}_2=\{\pm I_6\}$.
This allows us to introduce five real-valued invariant functions $s$, $\k_1$, $\k_2$, $\k_3$, $\k_4$, where $s$ is strictly increasing
and $\k_1$ never vanishes. The function $s$ provides a natural parametrization, uniquely defined up to a constant
of integration, which is invariant under the action of $G$. We call $s$ the \textit{Lie arclength} 
and the functions $\k_1$, $\k_2$, $\k_3$, $\k_4$ the \textit{Lie curvatures} of $\gamma$. 
Actually, these five functions %$s, \k_1,\k_2,\k_3,\k_4$
provide a complete set of Lie sphere invariants for generic transversal curves.
More precisely, we prove the following existence and uniqueness result. 

\begin{thmx}\label{thmA}
Given a strictly increasing smooth function $s: J \to \mathbb{R}$ and four smooth functions
$\k_1$, $\k_2$, $\k_3$, $\k_4:J \to \mathbb{R}$, with $\k_1(t)\neq 0$, for each $t\in J$, there exists a generic transversal curve
$\gamma: J \to \Lambda$ 
for which $s$ is the Lie arclength and $\k_1$, $\k_2$, $\k_3$, $\k_4$
are the Lie curvatures. 
In addition,
the curve $\gamma$ 
%is uniquely determined up to Lie sphere transformation,
is uniquely determined up to multiplication on the left by
an element of the Lie group $G$.
%In addition, if $\tilde{\gamma}$
%unique up to Lie sphere transformation, whose Lie sphere curvatures are the
%functions $\k_1$, $\k_2$, $\k_3$, $\k_4$.
\end{thmx}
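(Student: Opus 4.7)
The proof follows the classical ``fundamental theorem'' pattern associated with any moving-frame construction. From the analysis preceding the statement, the canonical Lie frame field $\mathfrak{A}: J \to [G]$ of a generic transversal curve $\gamma$ satisfies a structure equation of the form
\begin{equation*}
\mathfrak{A}^{-1}\, d\mathfrak{A} \;=\; \Omega(s,\kappa_1,\kappa_2,\kappa_3,\kappa_4)\, ds,
\end{equation*}
where the right-hand side is a smooth $\mathfrak{g}$-valued 1-form (with $\mathfrak{g} = \mathfrak{o}(4,2)$) whose coefficients depend algebraically on the Lie arclength $s$ and the four Lie curvatures. The theorem then reduces to showing that $\gamma$ can be reconstructed from this Maurer--Cartan datum alone.

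For uniqueness, let $\gamma, \tilde{\gamma}: J \to \Lambda$ be two generic transversal curves with the same $s$ and $\kappa_1,\ldots,\kappa_4$, and let $\mathfrak{A}, \tilde{\mathfrak{A}}$ denote their canonical frames. The equality of the right-hand sides in the two structure equations forces $\mathfrak{A}^{-1} d\mathfrak{A} = \tilde{\mathfrak{A}}^{-1} d\tilde{\mathfrak{A}}$, which by a standard Lie-group computation gives $d(\tilde{\mathfrak{A}} \mathfrak{A}^{-1}) = 0$; hence $\tilde{\mathfrak{A}} = X \cdot \mathfrak{A}$ for a constant $X \in [G]$. Lifting $X$ to $G$ (harmless, since $\{\pm I_6\}$ acts trivially on $\Lambda$) and applying the orbit map, we obtain $\tilde\gamma = g \cdot \gamma$ for some $g \in G$.

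For existence, the construction is run in reverse. Given $s$ and $\kappa_1,\ldots,\kappa_4$ on $J$ satisfying the stated hypotheses, define the $\mathfrak{g}$-valued 1-form $\Omega$ on $J$ by the formula dictated by the canonical-frame analysis. Because $J$ is one-dimensional the Maurer--Cartan integrability condition $d\Omega + \tfrac{1}{2}[\Omega,\Omega] = 0$ holds trivially, so the linear matrix ODE $\mathfrak{A}' = \mathfrak{A}\cdot \Omega(\partial_t)$ admits a global solution $\mathfrak{A}: J \to G$ for any initial condition. Setting $\gamma(t) = \mathfrak{A}(t) \cdot x_0$ for a fixed basepoint $x_0 \in \Lambda$ produces a smooth candidate curve.

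The remaining task is to verify that $\gamma$ is a generic transversal curve whose canonical Lie frame is the projection of $\mathfrak{A}$ to $[G]$ and whose Lie invariants are exactly the prescribed ones. Transversality to the contact distribution of $\Lambda$ follows from the non-vanishing of the component of $\Omega$ that encodes $ds$, which is non-zero because $s$ is strictly increasing; genericity translates into $\kappa_1 \neq 0$, which is part of the hypothesis; and the algebraic normalizations that singled out the canonical frame are exactly the ones built into the chosen form of $\Omega$, so $\mathfrak{A}$ does coincide with the canonical Lie frame of $\gamma$ and the prescribed functions are recovered as its Lie arclength and Lie curvatures. This last verification is the principal place where real work remains: it is essentially book-keeping, but requires chasing through the algebraic identifications established in the preceding moving-frame reduction.
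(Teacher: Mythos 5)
Your proposal is correct and follows essentially the same route as the paper: the paper deduces Theorem A directly from Proposition \ref{Canonical-frame} together with the Cartan--Darboux existence and congruence theorems, which is exactly the argument you unpack (uniqueness via $d(\tilde{\mathfrak{A}}\mathfrak{A}^{-1})=0$, existence via integration of the linear ODE, with the integrability condition vacuous on a one-dimensional domain). The only difference is that you spell out the Cartan--Darboux machinery and the final verification that the reconstructed curve is generic transversal with the prescribed invariants, whereas the paper cites these as known.
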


The second purpose of this paper is the study 
of the Lie-invariant functional 
\begin{equation}\label{inv-funct}
 \mathcal{L} [\gamma] = \int_\gamma ds,
 \end{equation}
%invariant under the Lie sphere group
defined on generic transversal curves by integrating the differential $\eta = ds$ of the Lie arclength.
%on generic transversal curves,
%\[
% \mathcal{W} [\gamma] = \int_\gamma ds.
% \]
%defined on generic transversal curves.
Using Griffiths' exterior differential systems 
approach to the calculus of variations \cite{Gr} (see also \cite{Bryant1987,GM, Hsu}), 
the variational problem defined by \eqref{inv-funct} is replaced by 
%an optimal control problem
%the study of 
a $G$-invariant variational problem for
%whose domain of definition consists of 
the integral curves of a linear control system on $M = [G]\times \R^4(\k_1,\k_2,\k_3,\k_4)$ 
defined by a suitable Pfaffian system $(\mathcal{A},\eta)$
with independence condition $\eta$ (see \eqref{Pfaff-system} and Remark \ref{r:derived-flag}).
For this, we exploit 
the existence of the canonical frame field along generic transversal curves.
We then carry out a general construction due to Griffiths 
%We then follow a general construction due to Griffiths and carry out a calculation 
to associate to the variational problem
a Pfaffian system on a certain affine subbundle $\mathcal{Z}$ of $T^*M$ 
%the phase space. 
{whose integral curves are stationary for the associated functional.
Actually, since all the derived systems of $(\mathcal{A},\eta)$ have constant 
rank, following Bryant \cite{Bryant1987} (see also \cite{Hsu}), we can conclude that all critical curves 
of \eqref{inv-funct} arise as projections of the integral curves of such a Pfaffian system.}
Eventually, we show that generic transversal curves are critical if and only if their Lie curvatures are constant
and satisfy certain additional algebraic conditions (see \eqref{conds-intro} below), which may be
interpreted as the natural equations of the critical curves.
%algebraic relations
%are related by specific equations.
%We will characterize the critical curves of the functional $\mathcal{W}$ in terms of their Lie sphere curvatures.
As a consequence, the critical curves arise as orbits of 1-parameter subgroups of $G$ through
the chosen origin of $\Lambda$.
More precisely, we will prove the following.

\vskip0.3cm
\noindent\textbf{Theorem B}.
%\begin{thmx}\label{thmA}
{\em Let $\gamma: J \to \Lambda$ be a generic transversal curve parametrized by Lie arclength $s$. 
Then, $\gamma$ is a critical curve of the functional $\mathcal{L}$
%an extremal trajectory
if and only if the Lie curvatures $\k_1$, $\k_2$, $\k_3$, $\k_4$ are {constant} and satisfy
the conditions
\begin{equation}\label{conds-intro} 
\k_1  \neq 0, 
%\quad \k_2 = \mathrm{constant},
\quad \k_3 =0, \quad \k_4 = (\k_1)^2-(\k_2)^2.
       \end{equation}
%
%\begin{equation}\label{conds-intro} 
%\k_1= \mathrm{constant} \neq 0, \quad \k_2 = \mathrm{constant},\quad \k_3 =0, \quad \k_4 = (\k_1)^2-(\k_2)^2.
%       \end{equation}
%
Moreover, if $\gamma$ is a critical curve, there exist constants
$u, v\in \R$, $u \neq 0$, such that
\[
 \gamma(s) = A \exp{\left[X(u,v) s\right]} \cdot \lambda_0
  \]
where $s$ is the Lie arclength, $\lambda_0=[e_0,e_1]$ is the origin of $\Lambda$,\footnote{See \eqref{proj}
in Section \ref{s:Lie-geom}.}
$A$ is a fixed element of {$G$}, and
%\[
%X(u,v)= \begin{pmatrix}
%u & -v  & 0    & 1 & u^2-v^2 &  0\\
%v & -u  & 1 & 0   &      0              &  -u^2+ v^2\\
%0              & 0                & 0    & 0   &      1              &  0\\
%0              &  0              &0  & 0   &      0              &  1\\
%1              &  0              & 0  & 0   &      u               & v\\
%0              &  -1              & 0  & 0   &      -v             &  -u\\
% \end{pmatrix} \in \mathfrak{g}
% \]
 \begin{equation}\label{X(a1,a2)-intro}
 X(u,v)= 
 \begin{pmatrix} u I_{1,1} + v LI_{1,1} & L & (u^2-v^2)I_{1,1} \\ 
 0 & 0 & I_2\\ 
 I_{1,1} & 0 &  u I_{1,1} - v LI_{1,1} \end{pmatrix} \in \mathfrak{g},
 \end{equation}
 where $\mathfrak{g}= \mathrm{Lie}(G)$, $L=(\begin{smallmatrix} 0&1\\1&0\end{smallmatrix})$, $I_{1,1}=(\begin{smallmatrix} 1&0\\0&-1\end{smallmatrix})$, 
 and $I_2$ is the $2\times 2$ identity matrix.}
%\end{thmx}
\vskip0.3cm

The paper if organized as follows. 
Section \ref{s:Lie-geom} briefly recalls the basic structure of Lie sphere geometry.
Section \ref{s:MF-for-TC} develops the method of moving Lie frames for transversal curves in $\Lambda$.  
We construct the canonical Lie frame field for a generic transversal curve and introduce its
Lie sphere invariants, namely the Lie arclength $s$ and the Lie sphere 
curvatures $\kappa_1, \kappa_2,\kappa_3, \kappa_4$. The results 
of this section are summarized in Proposition \ref{Canonical-frame}. 
Theorem \ref{thmA} is then proved as a consequence of Proposition \ref{Canonical-frame} 
and the Cartan--Darboux existence and uniqueness theorems for smooth
maps of a manifold into a Lie group. 
Finally, Section \ref{s:var-pbm} gives a proof of Theorem \ref{thmB} using the Griffiths formalism for the calculus of variations
in one variable.

\vskip0.1cm
\noindent\textit{Acknowledgements}. 
The author gratefully acknowledges valuable input by Emilio Musso.
%The author would like to thank Emilio Musso for valuable input. 
This work was partially supported by PRIN 2017 and 2022 ``Real and Complex 
Manifolds: Topology, Geometry and Holomorphic Dynamics" (protocolli 2017JZ2SW5-004 and 
2022AP8HZ9-003); and by GNSAGA of INdAM.

\section{Lie sphere geometry}\label{s:Lie-geom}

%\subsection{The Lie sphere group}\label{ss:Lie-geom}

We begin by recalling the basic structure of Lie sphere geometry.
More details about Lie sphere geometry are given in the monograph of Cecil \cite{Ce}, in the
book of Blaschke and Thomsen \cite{Blaschke}, in Lie's original work \cite{Lie,Lie-Scheffers}, or in \cite{JMNbook}.

Let $\R^{4,2}$ denote $\R^6$ with the orientation induced by the standard basis
$e_0,\dots,e_5$ and with
the symmetric bilinear form
\begin{equation}\label{lie-sp}
\langle x, y \rangle=-(x^0y^5 + x^5y^0)-(x^1y^4 + x^4y^1)  +x^2y^2 +x^3y^3 = \T x h\, y
\end{equation}
of signature $(4,2)$,
where
\begin{equation}\label{h-L}
h = (h_{ij}) = 
\begin{pmatrix} 0 & 0 & -L\\
0& I_2 & 0\\
-L & 0 & 0\end{pmatrix}, 
\quad L =  \begin{pmatrix} 0& 1\\1 & 0\end{pmatrix},
\quad I_2=  \begin{pmatrix} 1& 0\\0 & 1\end{pmatrix}.
\end{equation}
Let $G$ be the identity component of the pseudo-orthogonal group of \eqref{lie-sp},
%\textcolor{red}{Lie sphere group}
\[
             \{A \in \GL(6,\R) \mid \T A h A =h\} \cong \O(4,2),
     \]
%The group $G$ is called the \textit{Lie sphere group}.
and let
\[
 \mathfrak g = \{ X  \in \mathfrak{gl}(6,\R) \mid \T Xh + hX = 0  \}
    \]
be the Lie algebra of $G$. Following \cite{Cec-Jen-98}, it is useful to understand the elements of $\mathfrak{g}$ in the block form
\begin{equation}\label{g-block-form}
 \mathfrak g = \left\{ \begin{pmatrix} X_1 & \T X_2 & tI_{1,1} \\ X_4 & X_5 & X_2 L\\ s I_{1,1} & L \T X_4 & -L\T X_1 L\end{pmatrix}  
% \mid 
%\,\, \Bigg\vert\,\,
\left\vert \,\,
% \begin{matrix} X_1 \in \mathfrak{gl}(2,\R), & s, t \in\R \\
%X_5 \in \mathfrak{so}(2,\R), & X_2, X_4 \in \mathfrak{gl}(2,\R)\end{matrix}  \right\},
 \begin{matrix} X_1, X_2, X_4 \in \mathfrak{gl}(2,\R) \\ 
X_5 \in \mathfrak{so}(2,\R)\\ s, t \in\R \end{matrix} 
\right. 
\right\},
    \end{equation}
where $I_{1,1} = \begin{pmatrix} 1&0\\0&-1 \end{pmatrix}$.
For every $A\in G$, let $A_j= Ae_j$ denote the $j$th column vector of $A$.
Regarding each of the vectors $A_j$ as an $\R^6$-valued function on $G$, since the $A_j$ form a basis, there
exist unique 1-forms $\omega^i_j$, $i,j \in \{0,\dots,5\}$, so that
\begin{equation}\label{dAj}
dA_j = \sum_i \omega^i_j A_i, \quad j = 0,\dots,5.
\end{equation}
The 1-forms $\omega^i_j$ are the components of the Maurer--Cartan form $\omega =A^{-1}dA$ of $G$.
Using the block form $\eqref{g-block-form}$ for $\mathfrak{g}$, the Maurer--Cartan form $\omega$ can be written as
\begin{equation}\label{omega-block-form}
\omega = (\omega^i_j) = \begin{pmatrix} \Omega_1 & \T \Omega_2 & \omega^0_4 I_{1,1} \\ \Omega_4 & \Omega_5 & \Omega_2 L
\\ \omega^0_4 I_{1,1} & L \T \Omega_4 & -L\T \Omega_1 L\end{pmatrix} ,
%\mid 
% \begin{matrix} X_1 \in \mathfrak{gl}(2,\R) & s, t \in\R \\
%X_5 \in \mathfrak{so}(2,\R) & X_2, X_4 \in \mathfrak{gl}(2,\R)\end{matrix}
\end{equation}
where the blocks are left-invariant 1-forms satisfying the following relations:
%taking values as follows:
\begin{equation}\label{blocks-omega}
\begin{array}{ll}
 \begin{pmatrix} \omega^0_0 & \omega^0_1\\\omega^1_0& \omega^1_1\end{pmatrix} =\Omega_1, &
 \begin{pmatrix} \omega^4_4 & \omega^4_5\\\omega^5_4& \omega^5_5\end{pmatrix} = -L\T \Omega_1 L =
\begin{pmatrix} -\omega^1_1 & -\omega^0_1\\ -\omega^1_0& -\omega^0_0\end{pmatrix}\\
 \begin{pmatrix} \omega^0_2 & \omega^0_3\\\omega^1_2& \omega^1_3\end{pmatrix} = \T\Omega_2 &
 \begin{pmatrix} \omega^2_4 & \omega^2_5\\\omega^3_4 & \omega^3_5\end{pmatrix} =\Omega_2 L =
\begin{pmatrix} \omega^1_2 & \omega^0_2\\\omega^1_3 &\omega^0_3\end{pmatrix}, \\
  \begin{pmatrix} \omega^2_0 & \omega^2_1\\\omega^3_0 & \omega^3_1\end{pmatrix}= \Omega_4, &
 \begin{pmatrix} \omega^4_2 &\omega^4_3\\\omega^5_2& \omega^5_3\end{pmatrix} =L  \T \Omega_4 =
\begin{pmatrix} \omega^2_1 & \omega^3_1\\\omega^2_0& \omega^3_0\end{pmatrix}, \\
\begin{pmatrix} 0 & \omega^2_3\\ \omega^3_2& 0\end{pmatrix} =\Omega_5 = - \T \Omega_5,\\
\end{array}
\end{equation}
\begin{equation}
\begin{matrix}\label{bloks-omega1}
\omega^4_0 = -\omega^5_1, & \omega^4_1 = 0 = \omega^5_0, &\omega^0_4 = -\omega^1_5, &
\omega^0_5 = 0 = \omega^1_4.
\end{matrix}
\end{equation}

The 1-forms $\omega^0_0$, $\omega^0_1$, $\omega^1_0$, $\omega^1_1$, $\omega^2_0$, $\omega^2_1$,
$\omega^3_0$, $\omega^3_1$, $\omega^4_0$, $\omega^3_2$, $\omega^0_4$, $\omega^2_4$, $\omega^2_5$, $\omega^3_4$,
$\omega^3_5$, yield a left-invariant coframe field on $G$. Differentiating \eqref{dAj}, we obtain the 
Maurer--Cartan structure equations
\begin{equation}\label{struct-eqs}
 d\omega^i_j = - \sum_{k=0}^5 \omega^i_k \wedge \omega^k_j, \quad i,j \in \{0, \dots,5\}.
  \end{equation}

%%%%%%%

Let $\mathbb{RP}^5$ be the 5-dimensional real projective space. The \textit{Lie quadric} is 
\[
 \mathcal Q = \left\{[x] \in \mathbb{RP}^5 \mid \langle x, x\rangle = 0  \right\}.
  \]
In Lie sphere geometry, the Lie quadric parametrizes the set
of all oriented 2-sphere in $S^3$, including points, and the lines in $\mathcal Q$ correspond to parabolic
pencils of spheres in oriented contact. Let $[x,y]$ denote the line joining the points $[x]$ and $[y]$ in $\mathcal Q$.
It is known that the set $\Lambda$ of all lines in $\mathcal Q$, that is, the 
Grassmannian of null 2-planes through the origin in $\R^{4,2}$, forms a 5-dimensional
smooth manifold which can be identified with the unit tangent bundle $T_1(S^3)$ of $S^3\subset \R^4$.
{Under this identification}, by the standard action of $G$ on $\mathbb{RP}^5$, the quadric $\mathcal Q$ is mapped into itself.
Moreover, this action takes lines to lines and induces a transitive action on $\Lambda$.

\begin{remark}\label{r:T1S3}
{Viewing $T_1(S^3)$ as the set of all pairs $(v,\xi)\in S^3 \times S^3$ with $v\cdot \xi=0$, where $\cdot$
denotes the scalar product of $\R^4$,
 \[
 T_1(S^3) = \left\{(v, \xi) \in S^3\times S^3\subset \R^4 \times \R^4  \mid  v\cdot \xi =0 \right\},
 \]
 the identification of $T_1(S^3)$ with the Grassmannian $\Lambda$ can be realized by the map
 \[
  T_1(S^3) \ni (v,\xi) \longmapsto [A_0(v), A_1(\xi)]\in \Lambda,
    \]
    where, for $v= \T(v^0,v^1,v^2,v^3)$ and $\xi=\T(\xi^0,\xi^1,\xi^2,\xi^3)$,
    \begin{eqnarray*}
    A_0(v) &=& \frac12(1-v^0) e_0 - \frac12 v^1 (e_1-e_4) +\frac{1}{\sqrt2}v^2e_2 +\frac{1}{\sqrt2}v^3e_3 +\frac12(1+v^0)e_5,\\
    A_1(\xi) &=& -\frac12 \xi^0 (e_0-e_5) + \frac12 (1- \xi^1) e_1 +\frac{1}{\sqrt2}\xi^2e_2 +\frac{1}{\sqrt2}\xi^3e_3 +\frac12(1+\xi^1)e_4.
    \end{eqnarray*}
    }
%(see \cite{Ce, JMNbook} for more details).
\end{remark}

\vskip0.1cm
Let $\lambda_0=[e_0,e_1]$ be the chosen origin for $\Lambda$. The projection map
\begin{equation}\label{proj}
  \pi_\Lambda : G \ni A \longmapsto A[e_0,e_1] =[Ae_0, Ae_1]= [A_0, A_1] \in \Lambda = G/G_0
   \end{equation}
is then a principal $G_0$-bundle, where $G_0$ is the isotropy subgroup of $G$ at $\lambda_0$.
%$[e_0,e_1]$.
The isotropy subgroup $G_0$ is
\[
G_0 = 
\left\{ \begin{pmatrix} L\T C^{-1}L & \T Z & b \\ 0 & B & BZLC\\ 0 & 0 & C\end{pmatrix}  
%\mid 
\left\vert \,\,
 \begin{array}{ll} C \in \mathrm{GL}_+(2,\R), & b \in \mathfrak{gl}(2,\R) \\
B \in \mathrm{SO}(2,\R),  & Z \in \mathfrak{gl}(2,\R)\\
bC^{-1}L + L \T C^{-1} \T b = \T Z Z\end{array}  
\right.
\right\}.
\]
The elements of $G_0$ will be denoted by
\[
a(C,B,Z,b) = \begin{pmatrix} L\T C^{-1}L & \T Z & b \\ 0 & B & BZLC\\ 0 & 0 & C\end{pmatrix}.
\]
The inverse of $a(C,B,Z,b)$ is given by
\[
a(C,B,Z,b)^{-1} = \begin{pmatrix} L\T C L & -L \T C L \T Z \T B & L\T b L \\ 0 & \T B & -ZL\\ 0 & 0 & C^{-1}\end{pmatrix}.
\]

\begin{defn}
A \textit{Lie frame field} in $\Lambda$ is a local smooth section $A$ of \eqref{proj}, that is,
a smooth map $A : U \to G$, from an open set $U\subset \Lambda$, such that $\pi_\Lambda \circ A = [A_0,A_1]$.
\end{defn}

If $A : U \to G$  is a Lie frame field, writing $\omega^i_j$ for the entries $A^*(\omega^i_j)$ of $A^*(\omega)$,
which are now 1-forms on $U$,
it follows that the 1-forms 
\begin{equation}\label{coframe-on-U}
 \omega^4_0, \, \omega^2_0, \, \omega^2_1, \,  \omega^3_0, \,  \omega^3_1
  \end{equation}
define a coframe field on $U\subset \Lambda$.

Any other Lie frame field on $U$ is given by 
\[
  \hat{A} = A a(C,B,Z,b),
   \]
where $a : U \to G_0$ is an arbitrary smooth map. This implies
\begin{equation}\label{transf-rule}
\hat{\omega}= a^{-1} \omega a + a^{-1} da.
   \end{equation}
From this and \eqref{omega-block-form}, it follows that
\begin{equation}\label{transf-omega^4_0}
\hat{\omega}^4_0 I_{1,1} = \det (C^{-1}) \omega^4_0 I_{1,1} \iff \hat{\omega}^4_0 = \frac{1}{\det C} \omega^4_0,
\end{equation}
\begin{equation}\label{transf-Omega_4}
 \begin{pmatrix} \hat{\omega}^2_0& \hat{\omega}^2_1\\\hat{\omega}^3_0&\hat{\omega}^3_1 \end{pmatrix} = 
 \widehat{\Omega}_4  =
\T B  \Omega_4 L \T C^{-1} L + \omega^4_0 Z I_{1,1} \T C^{-1} L.
\end{equation}

%\subsubsection{The contact structure on $\Lambda$}
From the structure equations \eqref{struct-eqs}, using the relations \eqref{blocks-omega}, we have
\begin{equation}\label{domega40}
 d\omega^4_0 \equiv  
   -\omega^2_1 \wedge \omega^2_0 - \omega^3_1 \wedge \omega^3_0  
     \mod \omega^4_0.
      \end{equation}
Since $\omega^4_0$, $\omega^2_0$, $\omega^2_1$, $\omega^3_0$, $\omega^3_1$ gives a coframe on $U$, 
it follows from \eqref{domega40} that 
\[
 \omega^4_0 \wedge d\omega^4_0 \wedge d\omega^4_0 \neq 0.
  \]
Hence, according to \eqref{transf-omega^4_0}, 
as $A:U \to G$ varies through all Lie frame fields in $\Lambda$, 
the set of 1-forms $A^*(\omega^4_0) {= - \langle dA_0 , A_1\rangle}$ 
defines a contact structure on $\Lambda$.

\section{Moving Lie frames for transversal curves in $\Lambda$}\label{s:MF-for-TC}

In this section, we develop the method of moving Lie frames for the class of transversal curves in $\Lambda$.
By a \textit{transversal curve}, or \textit{T-curve}, %$\gamma : J \to \Lambda$ 
we mean a
%we consider 
smooth immersed parametrized curve $\gamma : J \to \Lambda$, from a connected open
set $J\subset\mathbb R$ to $\Lambda$, which is everywhere
transverse to the contact distribution induced on $\Lambda$ by the 1-form $\omega^4_0$,
%%%
{that is, $\alpha(\gamma'(t)) \neq 0$, for every $t\in J$ and for every local contact form $\alpha=A^*(\omega^4_0)$.}
%(see previous section).

\begin{remark}
{With reference to Remark \ref{r:T1S3}, let $\pi_1, \pi_2 :T_1(S^3) \to S^3$ denote the restrictions to $T_1(S^3)$
of the canonical projections of $S^3\times S^3$ onto its factors. Then, the equation $d\pi_1 \cdot \pi_2=0$
defines a contact distribution on $T_1(S^3)$ which 
under the identification $\Lambda\cong T_1(S^3)$ corresponds to the contact distribution induced on 
$\Lambda$ by 1-form $\omega^4_0$.
Accordingly, a transversal curve $\gamma (t)= (v(t), \xi(t))$ in $T_1(S^3) \cong \Lambda$ can be viewed
%consists
as an immersed curve $v(t)$ in $S^3$, together with a unit vector field $\xi$ along $v$, whose tangential
component is never zero.
}
\end{remark}

\begin{defn}
A \textit{Lie frame field along} a \textit{T-curve} $\gamma: J \to \Lambda$ is a smooth map
$A : I \subset J \to G$, defined on an open subset $I$ of $J$, such that the projection \eqref{proj} composed 
with $A$ is $\gamma$, that is, $\gamma(t) = [A_0(t), A_1(t)]$, for every $t \in I$.
\end{defn}

If $A : I \to G$ is a Lie frame field along $\gamma$, any other Lie frame field on $I$ is given by
\begin{equation}\label{Lie-frames-relat}
\hat{A} = A a(C,B,Z,b),
\end{equation}
where $a : I \to G_0$ is an arbitrary smooth map. Given a Lie frame field $A$, we denote by $\theta =(\theta^i_j)$
the pullback of the Maurer--Cartan form $\omega$ of $G$ by $A$. With this notation, the $\mathfrak{g}$-valued
1-forms $\theta$ and $\hat{\theta}$ are related by
\begin{equation}\label{trans-rules-1}
 \hat{\theta} = a^{-1}\theta a + a^{-1}da.
  \end{equation}
Since $\gamma$ is everywhere transversal to the contact distribution, from the transformation 
rules \eqref{transf-omega^4_0} 
and \eqref{transf-Omega_4} it follows that, 
%on a neighborhood of 
about every point, there exists a 
Lie frame field along $\gamma$,
such that
\begin{equation}\label{1st-order}
 \theta^4_0 = \rho(t) dt, \, \rho(t)> 0, \quad {\theta^2_0}(t)= {\theta^2_1}(t)= {\theta^3_0}(t) = {\theta^3_1}(t) = 0.
 %, \quad \forall\, t\in J.
\end{equation}

\begin{defn}
A Lie frame field $A:I\to G_0$ is of \textit{first order} if it satisfies \eqref{1st-order} at every point of $I$.
%A \textit{first order} Lie frame field along a transversal curve $\gamma$ is a Lie frame field satisfying \eqref{1st-order}.
\end{defn}

If $A$ is a first order Lie frame field, then any other on $I$ is given by \eqref{Lie-frames-relat}, where $a: I \to G_1$
is a smooth map, and
\[
G_1 = 
\left\{ \begin{pmatrix} L\T C^{-1}L & 0 & b \\ 0 & B & 0\\ 0 & 0 & C\end{pmatrix} 
% \mid 
\left\vert \,\,
 \begin{array}{cc} C \in \mathrm{GL}_+(2,\R), & b \in \mathfrak{gl}(2,\R) \\
B \in \mathrm{SO}(2,\R), & 
bC^{-1}L + L \T C^{-1} \T b = 0\end{array}  
\right.
\right\}.
\]
For a given first order Lie frame field $A:I \to G_1$, let
\[
 \begin{pmatrix} \theta^2_4 & \theta^2_5\\\theta^3_4 & \theta^3_5\end{pmatrix} = P \theta^4_0,
\]
where $P$ is a smooth $2\times 2$ matrix-valued function. If $\hat{A}$ is any other first order Lie frame field
on $I$, it follows from \eqref{trans-rules-1} that
\[
 \begin{pmatrix} \hat{\theta}^2_4 & \hat{\theta}^2_5\\\hat{\theta}^3_4 & \hat{\theta}^3_5\end{pmatrix} 
 = \T B  \begin{pmatrix} \theta^2_4 & \theta^2_5\\\theta^3_4 & \theta^3_5\end{pmatrix}  C,
\]
which implies 
\[
\hat{P} \hat{\theta}^4_0 = \T B PC \theta^4_0.
\]
From this, since $\hat{\theta}^4_0 = \frac{1}{\det (C)} \theta^4_0$ by \eqref{transf-omega^4_0}, we obtain
\begin{equation}\label{transf-of-P}
  \hat{P}  = \det (C)\, \T B PC.
   \end{equation}
The rank of $P(t)$ is independent of 
the choice of first order Lie frame field about $t$.

\begin{defn}
We shall say that the T-curve $\gamma : J \to \Lambda$ is \textit{nondegenerate} if $\det P \neq 0$
at every point of $J$. For a nondegenerate $\gamma$, the first order Lie frame fields may be further specialized
in such a way that $P = I_2$. 
A first order Lie frame field $A:I\to G_1$ is of \textit{second order} if $P=I_2$, 
at every point of $I$.
\end{defn}

If $A$ is a second order Lie frame field, then any other on $I$ is given by \eqref{Lie-frames-relat}, where $a: I \to G_2$
is a smooth map, and
\[
G_2 = 
\left\{ \begin{pmatrix} \T B & 0 & LB \mathfrak{h} \\ 0 & B & 0\\ 0 & 0 & B\end{pmatrix}  \mid 
 \begin{array}{cc} B \in \mathrm{SO}(2,\R), & \mathfrak{h} \in \mathfrak{so}(2,\R) \\
%B \in \mathrm{SO}(2,\R), & bB^{-1}L + L \T B^{-1} \T b = 0
\end{array}  
\right\}.
\]
It follows from \eqref{transf-omega^4_0} that the 1-form $\theta^4_0$ is independent of the choice of second order Lie frame
field. Moreover, since $\theta^4_0 = \rho dt$ and $\rho > 0$, there exists a strictly increasing function $s: J \to \mathbb R$, 
such that $\theta^4_0=ds$. The function $s$ is uniquely determined, up to a constant of integration.

\begin{defn}\label{natural-param}
The strictly increasing function $s$ such that $\theta^4_0=ds$ is called the \textit{Lie arclength} of the nondegenerate transversal 
curve $\gamma$.
\end{defn}

For a given second order Lie frame field $A : I \to G_2$, let
\[
 \Theta_1 = \begin{pmatrix} \theta^0_0 & \theta^0_1\\ \theta^1_0 & \theta^1_1 \end{pmatrix}.
  \]
Let $\hat{A}= A a$ be any other second order Lie frame field on $I$, where $a : I \to G_2$. 
Using the calculations $\T B L = LB$ and $B L I_{1,1} \T B 
= \det (\T B) L I_{1,1} = L I_{1,1}$, it follows from \eqref{trans-rules-1}
that
%the following transformation formulas
\begin{equation}\label{transf-Theta1}
 \widehat{\Theta}_1 = B\left(\Theta_1 -h I_2 \theta^4_0\right) \T B + B d (\T B),
   \end{equation}
where we have set $\mathfrak{h} = \begin{pmatrix} 0 & -h \\ h & 0\end{pmatrix}$. According to these transformation formulas,
%\eqref{transf-Theta1}, 
there exist second order Lie frame fields satisfying the condition
\begin{equation}\label{3rd-order}
 \theta^0_0 + \theta^1_1 = 0.
\end{equation}

\begin{defn}
A second order Lie frame field $A:I\to G_2$ is of \textit{third order} if it satisfies \eqref{3rd-order} at every point of $I$.
\end{defn}

If $A$ is a third order Lie frame field on $I$, then any other third order Lie frame field on $I$ is given by $\hat{A} = A a$, 
where $a: I \to G_3$ is a smooth map, and
\[
G_3 = 
\left\{ \begin{pmatrix} \T B & 0 & 0 \\ 0 & B & 0\\ 0 & 0 & B\end{pmatrix}  \mid 
 \begin{array}{c} B \in \mathrm{SO}(2,\R)
 %& f \in \mathfrak{so}(2,\R) \\
%B \in \mathrm{SO}(2,\R), & bB^{-1}L + L \T B^{-1} \T b = 0
\end{array}  
\right\}.
\]
For a given third order Lie frame field $A: I \to G_3$, we set
%\[
% \frac12(\theta^0_1 + \theta^1_0)=: E\, \theta^4_0, \quad \theta^1_1 =: F\,\theta^4_0,
%  \]
  \[
 \frac12(\theta^0_1 + \theta^1_0)=: p\, \theta^4_0, \quad \theta^1_1 =: q\,\theta^4_0,
  \]
where $p$ and $q$ are smooth functions. Let $\hat{A} = A a$ be any other third order frame field defined on $I$,
where $a: I \to G_3$, 
and write $B \in \mathrm{SO}(2,\mathbb{R})$ as $B = \begin{pmatrix} s&-t\\t & s\end{pmatrix}$.
It follows from \eqref{transf-Theta1} that $\hat{q} = (s^2-t^2) q -2st p$ and $\hat{p} = 2stq + (s^2-t^2) p$, that is,
$\hat{p}$, $\hat{q}$ and $p$, $q$ are related by
\[
  \begin{pmatrix} \hat{q} \\ \hat{p} \end{pmatrix} = B^2 \begin{pmatrix} {q} \\ {p} \end{pmatrix}.
   \]
In particular, we have that $p^2 +q^2 $ is a well defined real valued function.

{Moreover, from the transformation rule \eqref{transf-rule} it also follows that}
\begin{equation}\label{theta04-inv}
  \hat{\theta}^0_4 I_{1,1} = \theta^0_4 B I_{1,1} B = \theta^0_4 I_{1,1}.
   \end{equation}

\begin{defn}
We shall say that a nondegenerate T-curve $\gamma : J \to \Lambda$ is \textit{generic} if $p^2 + q^2 >0$
at every point of $J$.
\end{defn}

For a generic curve, third order frame fields my be further specialized by requiring that $p=0$. 

\begin{defn}
A third order Lie frame field $A:I\to G_3$ is of \textit{fourth order} if $p=0$ at every point of $I$.
\end{defn}

If $A$ is a fourth order Lie frame field on $I$, then any other fourth order Lie frame field on $I$ is given by $\hat{A} = A a$,
where $a: I \to G_4$, and
\[
G_4= 
%\mathbb Z_2 := 
\left\{ \begin{pmatrix} \pm I_2 & 0 & 0 \\ 0 & \pm I_2 & 0\\ 0 & 0 & \pm I_2 \end{pmatrix} \right\} 
%= \{ \pm I_6 \} 
=:\mathbb Z_2.
\]

Let $[G]$ denote the quotient Lie group $[G] = G/\mathbb{Z}_2$ and for $A \in G$ let $[A]$ denote its equivalence class in $[G]$. 
Thus $[A] = [B]$ if and only if $B = \pm A$.
The quotient group $[G] = G/\mathbb{Z}_2$ acts effectively on $\Lambda$. The principal fibration $\pi_\Lambda : G \to \Lambda$
is $\mathbb{Z}_2$-invariant and induces a principal fibration of $[G]$ over $\Lambda$. The fourth order frame fields along 
a generic T-curve $\gamma$ give rise to a well defined 
%cross section 
lift
$\mathfrak{A} = [A]$ to $[G]$,
%of $[G] \to \Lambda$,
where $A$ is any fourth order Lie frame field. 
%and [A] denotes the equivalence class in $[G]$.

We call the lift
%cross section 
$\mathfrak{A}: J \to [G]$ the \textit{canonical frame field} of the generic curve $\gamma : J \to \Lambda$.
Observe that the pullback $\mathfrak{A}^*(\omega)$ coincides with $\theta= A^*(\omega)$, where $A$
is any fourth order Lie frame field along $\gamma$.

Let $\mathfrak{A} : J \to [G]$ be the canonical frame field of a generic T-curve $\gamma : J \to \Lambda$,
parametrized by Lie arclength $s$. 
{The components of $\theta= A^*(\omega)$ not already specified in the previous steps of the reduction,
namely $\theta^0_0 = -\theta^1_1$, $\theta^1_0 = -\theta^0_1$, and $\theta^3_2$,
must be multiples of $ds$, and these multiples are differential invariants.}

The above discussion allows us to set
\[
 \theta^0_0 = \kappa_1 ds, \quad \theta^1_0 = \kappa_2 ds, \quad \theta^3_2 = \kappa_3 ds, \quad \theta^0_4 = \kappa_4 ds,
\]
where $\kappa_1\neq 0$, $\kappa_2$, $\kappa_3$, $\kappa_4: J \to \mathbb R$ are real-valued smooth functions. 
\begin{defn}
We call 
$\kappa_1$, $\kappa_2$, $\kappa_3$, $\kappa_4$ the \textit{Lie curvatures} of $\gamma$ (with respect to the
canonical frame field).
\end{defn}

\begin{remark}
{From the reduction procedure it follows that $\kappa_4$ is a third order invariant (cf. \eqref{theta04-inv}), while
 $\kappa_1, \kappa_2$, and $\kappa_3$ are fourth order invariants.}
% The Lie arclength function $s$ is a second order invariant.
\end{remark}

\vskip0.1cm
Summarizing, we have proved the following.

\begin{prop}\label{Canonical-frame}
Let $\gamma : J \to\Lambda$ be a generic T-curve.
There exists 
%a natural parameter $s$ for $\gamma$ and 
a unique frame field $\mathfrak{A} =[A] : J \to [G]$ along $\gamma$,
the {\em canonical frame} of $\gamma$,
such that 
\begin{equation}\label{Normal-frame}
\mathfrak{A}^*(\omega) =
%\theta =
 A^*(\omega) =
%A^{-1}dA=
%\begin{pmatrix}
%\kappa_1ds & -\kappa_2 ds & 0    & ds & \kappa_4 ds &  0\\
%\kappa_2ds & -\kappa_1 ds & ds  & 0   &      0              &  -\kappa_4 ds\\
%0              & 0                & 0    & -\kappa_3 ds   &      ds              &  0\\
%0              &  0              & \kappa_3ds  & 0   &      0              &  ds\\
%ds              &  0              & 0  & 0   &      \kappa_1 ds              &  \kappa_2ds\\
%0              &  -ds              & 0  & 0   &      -\kappa_2 ds              &  -\kappa_1ds\\
% \end{pmatrix},
% \end{equation}
% \[
  \begin{pmatrix} \k_1I_{1,1} + \k_2LI_{1,1} & L & \k_4I_{1,1} \\ 
  0 & \k_3L I_{1,1} & I_2\\ 
  I_{1,1} & 0 & \k_1I_{1,1} - \k_2LI_{1,1} \end{pmatrix}ds,
  %\]
  \end{equation}
where $s$ is the {\em Lie arclength},  
the functions 
$\kappa_1\neq 0$, $\kappa_2$, $\kappa_3$, $\kappa_4 : J \to \mathbb R$ 
%are smooth functions, 
%$\kappa_1(s) \neq 0$, for every $s\in J$, called the
are the {\em Lie curvatures} of $\gamma$,
%\begin{equation}
%  \kappa =\frac{1}{2}\langle F_1',F_1'\rangle, 
%  \quad   \tau ={\mathrm{Im}}\left(\langle F_1'',F_1'\rangle\right)+3\kappa^2.
%\end{equation}
and $A$ is any fourth order Lie frame field along $\gamma$. 
%Any other is given by $\pm A$. 
%
%Thus, there exists a unique frame
%field $F:= [A] : J \to [G]$ along $\gamma$, called the {\em canonical frame} of $\gamma$, such that $F^*(\omega) = \theta$.
\end{prop}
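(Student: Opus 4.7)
The plan is to prove Proposition \ref{Canonical-frame} by assembling and organizing the successive frame reductions carried out throughout Section \ref{s:MF-for-TC}. Starting from an arbitrary local Lie frame field along $\gamma$, one uses the transformation rules \eqref{transf-omega^4_0} and \eqref{transf-Omega_4} together with the transversality hypothesis to reduce to a first order frame satisfying \eqref{1st-order}; the residual isotropy is $G_1$. The nondegeneracy condition $\det P \neq 0$ combined with the transformation law \eqref{transf-of-P} then allows one to reduce to a second order frame with $P = I_2$, and the residual isotropy drops to $G_2$. At this stage, the 1-form $\theta^4_0 = \rho\, dt$ is an absolute invariant with $\rho > 0$, so integrating yields a strictly increasing function $s : J \to \mathbb{R}$ with $\theta^4_0 = ds$, unique up to an additive constant; this is the Lie arclength.

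The next reduction uses the transformation formula \eqref{transf-Theta1} to trivialize $\theta^0_0 + \theta^1_1$, producing a third order frame \eqref{3rd-order} with isotropy $G_3 \cong \mathrm{SO}(2,\mathbb{R})$. The genericity condition $p^2 + q^2 > 0$, together with the transformation law
\[
\begin{pmatrix} \hat{q}\\ \hat{p}\end{pmatrix} = B^2 \begin{pmatrix} q\\ p\end{pmatrix},
\]
shows that one can rotate $(p,q)$ to the positive $q$-axis, leaving a residual $\mathbb{Z}_2$ ambiguity from the kernel of $B \mapsto B^2$. This produces a fourth order frame with $p = 0$, and passing to the quotient Lie group $[G] = G/\mathbb{Z}_2$ removes the sign ambiguity, yielding a well-defined canonical lift $\mathfrak{A} = [A] : J \to [G]$.

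It then remains to verify that the pullback $\mathfrak{A}^*(\omega)$ has the block form in \eqref{Normal-frame}. The vanishing of $\theta^2_0, \theta^2_1, \theta^3_0, \theta^3_1$ comes from first order; the upper block $\T\Omega_2 = L$ comes from $P = I_2$ together with the symmetry relations \eqref{blocks-omega}; the middle block $\Omega_5 = \kappa_3 L I_{1,1}\, ds$ uses the skew-symmetry of $\Omega_5$ and the fact that $\theta^3_2$ is a multiple of $ds$; the block $\Theta_1 = (\kappa_1 I_{1,1} + \kappa_2 L I_{1,1})\, ds$ is forced by the third order condition $\theta^0_0 + \theta^1_1 = 0$, the fourth order condition $p = \tfrac12(\theta^0_1 + \theta^1_0) = 0$, and the definitions of $\kappa_1, \kappa_2$; and the upper right block is $\theta^0_4 I_{1,1} = \kappa_4 I_{1,1}\, ds$ by \eqref{bloks-omega1} and the definition of $\kappa_4$. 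The remaining block entries are determined by the relations \eqref{blocks-omega}--\eqref{bloks-omega1} defining $\mathfrak{g}$.

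The main obstacle is not any single calculation but the bookkeeping required in the third-to-fourth order step: one must confirm that the genericity condition $p^2 + q^2 > 0$ is precisely what permits normalization of $(p,q)$ under the double-covering action $B \mapsto B^2$ of $\mathrm{SO}(2,\mathbb{R})$, producing a discrete $\mathbb{Z}_2$ residue rather than a continuous family of ambiguities; this is exactly what makes the lift well defined modulo $\mathbb{Z}_2$ and justifies descending to $[G]$.
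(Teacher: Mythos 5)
Your proposal is correct and follows essentially the same route as the paper: the paper's proof of Proposition \ref{Canonical-frame} is precisely the chain of frame reductions (first through fourth order, using transversality, nondegeneracy of $P$, the normalization $\theta^0_0+\theta^1_1=0$, and the genericity condition $p^2+q^2>0$) followed by passage to $[G]=G/\mathbb{Z}_2$, which you reproduce step by step, including the correct identification of the residual $\mathbb{Z}_2$ ambiguity and the verification of the block form \eqref{Normal-frame} from \eqref{blocks-omega}--\eqref{bloks-omega1}.
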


 \begin{remark}
As a consequence of Proposition \ref{Canonical-frame} and the Cartan--Darboux existence and congruence theorems 
for smooth maps of a manifold into a Lie group (see \cite{Gr,JMNbook}),
given four smooth functions $\kappa_1\neq 0$, $\kappa_2$, $\kappa_3$, $\kappa_4 : J \to \mathbb R$
and a strictly increasing function $s: J\to \R$,
there exists a 
generic T-curve $\gamma:J\to\Lambda$, unique up to Lie sphere transformation,
for which $s$ is
%parametrized by 
the Lie arclength and
%whose Lie sphere curvatures are 
$\kappa_1$, $\kappa_2$, $\kappa_3$, $\kappa_4$ are the Lie curvatures. This proves Theorem \ref{thmA}.
%The curve $\gamma$ is unique up to Lie sphere transformations of $\Lambda$.
\end{remark}

%%%%%%%%%%

\section{A variational problem for transversal curves in $\Lambda$}\label{s:var-pbm}

Let $\mathfrak{T}$ denote the space of generic T-curves in $\Lambda$ which are parametrized by Lie arclength.
Consider the Lie-invariant functional $\mathcal{L} : \mathfrak{T} \to \mathbb R$, defined by
\begin{equation}\label{action-funct}
% \mathcal{W} : \gamma  \in \mathfrak{T} \longmapsto \int_{J_\gamma} { \tau_\gamma\, \eta_\gamma} \,,
  \mathcal{L}[\gamma] =  \int_{J_\gamma} { ds} \,,
   \end{equation}
%defined on the space $\mathfrak{T}$ of generic transversal curves in $\mathcal{S}$,
where ${J_\gamma}$ is the definition domain of $\gamma$
and $s$ is the Lie arclength of $\gamma$ (see Def. \ref{natural-param}).

%is the infinitesimal strain of $\gamma$ (cf. Section \ref{ss:canonical-frame}).

A T-curve $\gamma$ of $\mathfrak{T}$ is a \textit{critical curve}
%an \textit{extremal trajectory} (or simply a \textit{trajectory})
%in $\Lambda$ 
if it is a critical point of 
%the functional 
$\mathcal{L}$ when considering compactly 
supported variations through generic T-curves.

\begin{remark}
As usual, by a compactly supported variation of $\gamma \in \mathfrak T$ is meant a mapping 
$V : J \times (-\epsilon, \epsilon) \to \Lambda$, such that: (1) for all $u\in (-\epsilon, \epsilon)$, the
map $\gamma_u = V(s,u): J \to \Lambda$ is a generic T-curve; (2) $\gamma_0 = \gamma(s)$, for all $s \in J$;
and (3) there exists a closed interval $[a,b] \subset J$ such that
\begin{equation}\label{variation}
  V(s,u) = \gamma(s), \quad \forall\, s \in J\setminus [a,b], \quad \forall\, u \in (-\epsilon, \epsilon).
\end{equation}
Accordingly, $\gamma\in \mathfrak T$ is a critical curve if, for every compactly supported variations $V$,
we have that 
\[
\frac{d}{du}\left.\left( \int_{a_V}^{b_V}ds_u\right)\right|_{u=0} =0,
	\]
where $[a_V,b_V]$ is the support of the variation, that is, the smallest closed interval for which 
\eqref{variation} holds, and $s_u$ is the Lie arclength of the T-curve $\gamma_u$.
\end{remark}

\vskip0.1cm
In this section we will prove the following.

\begin{thmx}\label{thmB}
Let $\gamma: J \to \Lambda$ be a generic T-curve parametrized by Lie arclength $s$. 
Then, $\gamma$ is a critical curve of the functional $\mathcal{L}$
%an extremal trajectory
if and only if the Lie curvatures $\k_1$, $\k_2$, $\k_3$, $\k_4$ are {constant} and satisfy
the conditions
\begin{equation}\label{conds} 
\k_1 \neq 0, 
%\quad \k_2 = \mathrm{constant},
\quad \k_3 =0, \quad \k_4 = (\k_1)^2-(\k_2)^2.
       \end{equation}
Moreover, if $\gamma$ is a critical curve, there exist constants
%two real constants 
$u, v\in \R$, $u \neq 0$, such that
%there exists critical curve $\gamma$ arises as orbit of a 1-parameter subgroup of $G$, that is,
\[
\gamma(s) = A \exp{\left[X(u,v) s\right]} \cdot \lambda_0, 
%\quad g(s) =\exp{\left[Q(\k_1,\k_2) s\right]},
\]
where $s$ is the Lie arclength, $\lambda_0=[e_0,e_1]$ is the origin of $\Lambda$,
%\eqref{lie-sp}-null 2-space spanned by $e_0$, $e_2$, 
$A$ is a fixed element of {$G$}, and
%\[
%X(a_1,a_2)= \begin{pmatrix}
%a_1 & -a_2  & 0    & 1 & (a_1)^2-(a_2)^2 &  0\\
%a_2 & -a_1  & 1 & 0   &      0              &  -(a_1)^2+ (a_2)^2\\
%0              & 0                & 0    & 0   &      1              &  0\\
%0              &  0              &0  & 0   &      0              &  1\\
%1              &  0              & 0  & 0   &      a_1               & a_2\\
%0              &  -1              & 0  & 0   &      -a_2             &  -a_1\\
% \end{pmatrix} \in \mathfrak{g}
% \]
 \begin{equation}\label{X(a1,a2)}
 X(u,v)= 
 \begin{pmatrix} u I_{1,1} + v LI_{1,1} & L & (u^2-v^2)I_{1,1} \\ 
 0 & 0 & I_2\\ 
 I_{1,1} & 0 &  u I_{1,1} - v LI_{1,1} \end{pmatrix} \in \mathfrak{g},
 \end{equation}
 where $L=(\begin{smallmatrix} 0&1\\1&0\end{smallmatrix})$, $I_{1,1}=(\begin{smallmatrix} 1&0\\0&-1\end{smallmatrix})$, 
 and $I_2$ denotes the $2\times 2$ identity matrix.
\end{thmx}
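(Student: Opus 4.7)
\textbf{Proof proposal for Theorem \ref{thmB}.}

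The plan is to apply Griffiths' exterior differential systems formalism for the calculus of variations in one independent variable, using the Lagrangian $\eta = ds$ pulled back to a manifold on which the canonical-frame data naturally live.

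First, using Proposition \ref{Canonical-frame}, I would encode the generic T-curves together with their Lie invariants as integral curves of a Pfaffian system on $M = [G] \times \mathbb{R}^4$ with coordinates $(\mathfrak{A}, \kappa_1, \kappa_2, \kappa_3, \kappa_4)$. Namely, \eqref{Normal-frame} says that the canonical lift of any such curve satisfies $\mathfrak{A}^*(\omega) = \Phi(\kappa)\, ds$, where $\Phi(\kappa) \in \mathfrak{g}$ is the $3 \times 3$ block matrix of \eqref{Normal-frame}. This prescribes a $\mathfrak{g}$-valued semi-basic 1-form on $M$, whose components away from $ds$ generate the Pfaffian ideal $\mathcal{A}$; together with the independence condition $\eta = ds$, the admissible curves of $(\mathcal{A}, \eta)$ are precisely the canonical lifts of generic T-curves paired with their Lie curvatures.

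Next, I would carry out Griffiths' construction verbatim. The Lagrangian being simply $\eta$, the momentum space is the affine subbundle $\mathcal{Z} \subset T^*M$ of 1-forms of the form $\psi = \eta + \sum_i \mu_i\, \theta_i$, where $\{\theta_i\}$ generates $\mathcal{A}$; the Cartan form on $\mathcal{Z}$ is the tautological 1-form $\psi$, and the critical curves of $\mathcal{L}$ are projections of integral curves of the Pfaffian system $\mathcal{I}_\psi$ generated by $\{\theta_i\}$ and the interior products of $d\psi$ with vertical vector fields of $\mathcal{Z} \to M$. Since all derived systems of $(\mathcal{A}, \eta)$ have constant rank, Bryant's refinement \cite{Bryant1987} (cf.\ also \cite{Hsu}) ensures that every critical curve is captured by this Pfaffian system; this is the justification already flagged in the introduction via Remark \ref{r:derived-flag}.

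The computational heart of the proof --- and the main obstacle --- is the explicit reduction of these Euler--Lagrange equations. Substituting the constant-$\kappa$ form \eqref{Normal-frame} for $\omega$ into the Maurer--Cartan structure equations \eqref{struct-eqs} and imposing the relations \eqref{blocks-omega}--\eqref{bloks-omega1} yields a finite list of algebraic equations for the multipliers $\mu_i$ together with constraints on the $\kappa_j$ and their $s$-derivatives. A careful block-by-block bookkeeping of the coefficients of each independent wedge product $\theta_i \wedge ds$ allows one to solve for the $\mu_i$ from the horizontal equations; the remaining compatibility conditions, after eliminating all multipliers, force $\kappa_j' = 0$ for $j = 1,2,3,4$ and the algebraic relations
\[
\kappa_3 = 0, \qquad \kappa_4 = \kappa_1^2 - \kappa_2^2,
\]
subject to $\kappa_1 \neq 0$ (which is automatic since $\gamma$ is generic). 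This is precisely \eqref{conds}. Conversely, when these conditions hold, the tautological section of $\mathcal{Z}$ obtained by substituting the multipliers back in gives an integral curve of $\mathcal{I}_\psi$, so the conditions are sufficient as well.

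Finally, I would derive the integrated form. With $\kappa_j$ constant and satisfying \eqref{conds}, the right-hand side of \eqref{Normal-frame} is a fixed element of $\mathfrak{g}$, which by direct inspection equals $X(u,v)$ of \eqref{X(a1,a2)} upon setting $u = \kappa_1$ and $v = \kappa_2$. Hence the canonical frame satisfies $\mathfrak{A}^{-1} d\mathfrak{A} = X(u,v)\, ds$, and integration together with the Cartan--Darboux uniqueness theorem yields
\[
\mathfrak{A}(s) = [A \exp(s\, X(u,v))]
\]
for a fixed $A \in G$. Projecting by $\pi_\Lambda$ via \eqref{proj} gives $\gamma(s) = A \exp(s\, X(u,v)) \cdot \lambda_0$ with $u \neq 0$, completing the proof.
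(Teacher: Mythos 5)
Your proposal follows essentially the same route as the paper: the same Pfaffian system on $[G]\times\R^4$ encoding Proposition \ref{Canonical-frame}, the same phase space $\mathcal{Z}\subset T^*M$ with the Liouville form, Bryant's constant-rank criterion to guarantee that \emph{all} critical curves arise as projections of integral curves, the same elimination of the multipliers to obtain \eqref{conds}, and the same final integration via the Cartan--Darboux theorem. One small correction: the Cartan (Euler--Lagrange) system is generated by the contractions of $d\psi$ with \emph{all} vector fields on $\mathcal{Z}$, not only the vertical ones --- vertical contractions merely reproduce the generators of $\mathcal{A}$ --- but your subsequent description of solving the horizontal equations for the multipliers and eliminating them shows you are in fact using the full system, exactly as in the paper's equations \eqref{cs1}--\eqref{cs19}.
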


\begin{proof}[Proof of Theorem \ref{thmB}]
The proof is divided in three parts.
%of Theorem \ref{thmA} 
%is organized in three steps.

%\subsection{The Pfaffian system of prolonged frames}\label{2.3}

\vskip0.2cm
\noindent \textbf{Part 1.} 
First, we show that the set of generic T-curves is in one-to-one correspondence 
with the set of integral curves of a suitable Pfaffian differential system.

\vskip0.1cm
Consider a generic T-curve $\gamma : J \to \Lambda$ parametrized by Lie arclength.
%Let $\gamma : J \to \Lambda$ be a generic T-curve parametrized by Lie arclength.
By Proposition \ref{Canonical-frame}, there exists
%the canonical frame of $\gamma$ defines 
a unique lift $\mathfrak{A} : J \to [G]$ of $\gamma$ to $[G]$. 
The map
\[
 \mathfrak{a} : J\ni s\longmapsto \big(\mathfrak{A}, \kappa_1(s), \kappa_2(s), \kappa_3(s), \kappa_4(s) \big)\in [G]\times\R^4,
 \]
where $\kappa_1 \neq 0$, is called the \textit{prolonged frame} of $\gamma$.
We call the product space $M := [G]\times\R^4$ 
%is called 
the \emph{configuration space} and 
denote
the coordinates on $\R^4$ 
%are denoted 
by  $(\kappa_1 , \kappa_2 , \kappa_3 , \kappa_4)$.

Slightly abusing notation,
%With some abuse of notation, 
we let $\omega^i_j$, $i,j\in\{0,\dots,5\}$, denote the entries of the Maurer--Cartan form of $[{G}]$,
as well as their pullbacks on 
%the configuration space 
$M$.
%
%By Proposition \ref{Canonical-frame}, the prolonged frames of $\gamma$ are the integral curves of 
On the configuration space $M$, we consider the Pfaffian
differential system $(\mathcal A, \eta)$ 
generated by the  
linearly independent 1-forms
%\begin{eqnarray*}
\begin{equation}\label{Pfaff-system}
\begin{array}{llll}
  \mu^1=\omega^2_0, &  \mu^2=\omega^2_1, & \mu^3 = \omega^3_0, & \mu^4 = \omega^3_1, \\
  \mu^5=\omega^2_4- \eta, &   \mu^6 = \omega^3_5 - \eta, &  \mu^7 = \omega^2_5, & \mu^8 = \omega^3_4, \\
  \mu^9 = \omega^0_0 - \kappa_1 \eta, &  \mu^{10} = \omega^1_1 + \kappa_1\eta, & \mu^{11} = \omega^1_0 -\kappa_2\eta, & \mu^{12} = \omega^0_1 + \kappa_2 \eta, \\
 \mu^{13} = \omega^3_2 - \kappa_3\eta, & \mu^{14} = \omega^0_4 - \kappa_4 \eta, &  & 
 \end{array}
 \end{equation}
%\end{eqnarray*}
subject to the \textit{independence condition} $$\eta =\omega^4_0 \neq 0.$$

\begin{remark}\label{r:linear-control-system}
Following \cite{GM},
%\cite[Definition 2.2, p. 306]{GM}, 
the Pfaffian system $(\mathcal{A},\eta)$ 
%defined by \eqref{Pfaff-system} 
is a \textit{linear control system} on 
%the Lie group 
$[G]$ associated to the 4-dimensional affine subspace $\mathbb{A} \subset \mathfrak{g}$ 
parametrized by 
\[
 \R^4 \ni\T(\k_1,\k_2,\k_3,\k_4) \mapsto 
  \begin{pmatrix} \k_1I_{1,1} + \k_2LI_{1,1} & L & \k_4I_{1,1} \\ 0 & \k_3L I_{1,1} & I_2\\ I_{1,1} & 0 &  \k_1I_{1,1} - \k_2LI_{1,1} \end{pmatrix}
  \in \mathfrak{g}.
\]
\end{remark}

By Proposition \ref{Canonical-frame}, the prolonged frames of $\gamma$ are integral curves of $(\mathcal A, \eta)$.
Conversely, if $\mathfrak{a} = \big( [A], \kappa_1 , \kappa_2 , \kappa_3 , \kappa_4\big) : J \to M$ is an integral curve 
of $(\mathcal{A}, \eta)$, then
$\gamma = [A_0,A_1]  : J \to \Lambda$ defines a generic T-curve, such that $[A]$ is its canonical frame, 
%field along $\gamma$,
and $\kappa_1 , \kappa_2 , \kappa_3 , \kappa_4$ its Lie curvatures. 
Accordingly, the integral curves of $(\mathcal{A},\eta)$ are the prolonged frames of generic T-curves 
in $\Lambda$.
%can be identified with the integral curves of the Pfaffian system $(\mathcal{A},\eta)$. 
%The latter will be referred to as the \textit{canonical (control) system}. 

Therefore, generic T-curves are in one-to-one correspondence with the integral curves of the Pfaffian system
$(\mathcal{A},\eta)$.

If we set
$$
 % \pi^1=\eta,\quad 
  \pi^1=d\kappa_1, \quad \pi^2 = d\kappa_2, \quad \pi^3 = d\kappa_3, \quad \pi^4 = d\kappa_4, 
  %\quad \pi^5 = d\kappa_4,
     $$
the 1-forms $\{\mu^a,\eta, \pi^\ell\}$, $a=1,\dots,14; \ell = 1, \dots,4$, define a global coframe on $M$.
Taking the exterior derivative and using the Maurer--Cartan equations \eqref{struct-eqs} imply the following structure equations 
for the coframe $\{\mu^a,\eta,\pi^\ell\}$:
\begin{equation}\label{quadratic1}
\begin{array}{l}
d\mu^1 \equiv \eta\wedge (\kappa_1\mu^1 + \kappa_2 \mu^2 + \kappa_3 \mu^3 + \mu^5), \\
d\mu^2 \equiv \eta\wedge (-\kappa_2\mu^1 - \kappa_1 \mu^2 + \kappa_3 \mu^3 - \mu^7),\\
d\mu^3 \equiv \eta\wedge (-\kappa_3\mu^1 + \kappa_1 \mu^3 + \kappa_2 \mu^4 + \mu^8),\\
d\mu^4 \equiv \eta\wedge (-\kappa_3\mu^2 - \kappa_2 \mu^3 - \kappa_1 \mu^4 - \mu^6),\\
d\mu^5 \equiv \eta\wedge (\kappa_4\mu^1 + \kappa_1 \mu^5 - \kappa_2 \mu^7 + \kappa_3\mu^8 + \mu^9 + 2 \mu^{10}),\\
d\mu^6 \equiv \eta\wedge (-\kappa_4\mu^4 - \kappa_1 \mu^6 - \kappa_3 \mu^7 + \kappa_2\mu^8 + 2\mu^9 +  \mu^{10}),\\
d\mu^7 \equiv \eta\wedge (-\kappa_4\mu^2 + \kappa_2 \mu^5 + \kappa_3 \mu^6 - \kappa_1\mu^7 +  \mu^{12} - \mu^{13}),\\
d\mu^8 \equiv \eta\wedge (\kappa_4\mu^3 - \kappa_3 \mu^5 - \kappa_2 \mu^6 + \kappa_1\mu^8 +  \mu^{11} + \mu^{13}),\\
d\mu^9 \equiv \eta\wedge (-\mu^3 + \kappa_1 \mu^9 + \kappa_1 \mu^{10} +\kappa_2\mu^{11}+ \kappa_2\mu^{12} +  \mu^{14} + \pi^{1}),\\
d\mu^{10} \equiv \eta\wedge (-\mu^2 - \kappa_1 \mu^9 - \kappa_1 \mu^{10} -\kappa_2\mu^{11}- \kappa_2\mu^{12} +  \mu^{14} - \pi^{1}),\\
d\mu^{11} \equiv \eta\wedge (-\mu^1 +2 \kappa_2 \mu^{10} +2\kappa_1\mu^{11}+ \pi^{2}),\\
d\mu^{12} \equiv \eta\wedge (-\mu^4 - 2 \kappa_2 \mu^{9} -2\kappa_1\mu^{12}- \pi^{2}),\\
d\mu^{13} \equiv \eta\wedge (-\mu^1 + \mu^{4} + \kappa_3\mu^{9}+ \kappa_3\mu^{10} {+ \pi^{3}}),\\
d\mu^{14} \equiv \eta\wedge (\mu^7 {- \mu^{8}} + 2\kappa_4\mu^{9}+ 2\kappa_4\mu^{10} + \pi^{4}),\\
\end{array}
\end{equation}
\begin{equation}\label{quadratic2}
\begin{array}{l}
d\eta \equiv - \eta \wedge (\mu^9 +\mu^{10}),\\
d\pi^1 = d\pi^1 = d\pi^3 = d\pi^4 = 0,
\end{array}
\end{equation}
where 
%the sign 
`$\equiv$' denotes congruence modulo the span of $\{ \mu^a\wedge\mu^b\}_{a,b = 1,\dots,14}$\,.

\begin{remark}\label{r:derived-flag}
It follows from \eqref{quadratic1} that the derived systems of $(\mathcal{A},\eta)$ are
\[
\mathcal{A}_1 = \mathrm{span}\{ \mu^1,\dots,\mu^8 \}, \quad \mathcal{A}_2 = \mathrm{span}\{ \mu^1,\dots,\mu^4 \}, \quad
\mathcal{A}_3 = \{0\},
\]
and hence the rank of each derived system is constant. (For the notion of derived system, see 
\cite{BCGGG,Gr,ILbook}.)
Alternatively, see \cite[Proposition 2.3]{GM}, where it is shown that all the derived systems of 
a linear control system $(\mathcal{A},\eta)$
%a linear control system 
%on a Lie group 
are of constant rank.
%%
%Thus, all the derived systems of $(\mathcal{A},\eta)$  have constant rank. For the notion of derived system, see 
%\cite{BCGGG,Gr}.
\end{remark}

\noindent \textbf{Part 2.}  
%
%We derive the Euler--Lagrange equations 
%by a construction (due to Griffiths \cite{Gr}) on an affine subbundle of $T^*(M)$ (cf. also \cite{Bryant1987,GM, Hsu}).
%
We now carry out a construction due to Griffiths \cite{Gr} on an affine subbundle of $T^*(M)$
 (see also \cite{Bryant1987,GM, Hsu}) in order to 
characterize the critical curves of $\mathcal{L}$.

%\vskip0.1cm
Let $\mathcal{Z}$ be the affine subbundle of $T^*(M)$ defined by
the 1-forms  $\{\mu^a\}$ and $\eta$, that is to say
$$
  \mathcal{Z}=\eta+\text{span}\{\mu^1,\dots,\mu^{14}\} \subset T^*(M).
     $$
The subbundle $\mathcal{Z}$ is called the \textit{phase space} of the Pfaffian system $(\mathcal{A},\eta)$.
The 1-forms  $(\mu^a, {\eta})$ induce a global affine trivialization of $\mathcal{Z}$, which in turn
may be identified with $M \times \mathbb{R}^{14}$ by the map
\[
%M\times \mathbb{R}^{14} \ni 
\left(([A], \kappa_1, \kappa_2,\kappa_3,\kappa_4), p_1, \dots, p_{14}\right) \mapsto 
%\eta_{|_{([\mathcal{F}], \kappa, \tau)}} 
{\eta_{|_{([A], \kappa_1, \kappa_2,\kappa_3,\kappa_4)}} }
+ \sum_{a=1}^{14} p_a{\mu^a}_{|_{([A], \kappa_1, \kappa_2,\kappa_3,\kappa_4)}},
%\in \mathcal{Z}\,,
\]
where $p_1,\dots, p_{14}$ are the fiber coordinates of the bundle map $\mathcal{Z} \to M$ with respect to the trivialization.
In this identification, the Liouville (canonical) 1-form of $T^*(M)$ restricted to $\mathcal{Z}$ takes the form
\begin{equation}\label{Liouville-form}
  \zeta= \eta  +  \sum_{a=1}^{14}p_a\mu^a\,. %+p_1\mu^1+\cdots +p_7\mu^7\,.
   \end{equation}
Taking the exterior derivative of \eqref{Liouville-form} and using of the quadratic equations \eqref{quadratic1} and \eqref{quadratic2} give
%\[
%  d\xi \equiv  d\eta  +  \sum_{a=1}^{14}dp_a\wedge \mu^a +\sum_{a=1}^{14}p_a d\mu^a
%   \]
%
\begin{equation*}
\begin{split}
  d\zeta & =    d\eta  +  \sum_{a=1}^{14}dp_a\wedge \mu^a +\sum_{a=1}^{14}p_a d\mu^a \equiv \\
 % - \eta \wedge (\mu^9 +\mu^{10})  %inseriti nei termini \mu^9 e \mu^10
 &[dp_1 + (p_1\k_1 -p_2\k_2 -p_3\k_3 +p_5 \k_4 -p_{11} - p_{13})\eta]\wedge \mu^1 +\\
 &[dp_2 + (p_1\k_2 -p_2\k_1 -p_4\k_3 -p_7 \k_4 -p_{10})\eta]\wedge \mu^2 +\\
 &[dp_3 + (p_1\k_3 +p_3\k_1 -p_4\k_2 +p_8 \k_4 -p_{9})\eta]\wedge \mu^3 +\\
 &[dp_4 + (p_2\k_3 +p_3\k_2 -p_4\k_1 -p_6 \k_4 -p_{12}+p_{13})\eta]\wedge \mu^4 +\\
 &[dp_5 + (p_1 +p_5\k_1 +p_7\k_2 -p_8\k_3 )\eta]\wedge \mu^5 +\\
 &[dp_6 + (-p_4 - p_6\k_1 +p_7\k_3 -p_8\k_2)\eta]\wedge \mu^6 +\\
 &[dp_7 + (-p_2  -p_5\k_2 -p_6\k_3 -p_7 \k_1 + p_{14})\eta]\wedge \mu^7 +\\
 &[dp_8 + (p_3 +p_5\k_3 +p_6\k_2 +p_8 \k_1 {-p_{14}})\eta]\wedge \mu^8 +\\
 &[dp_9 + ({-1} +p_5 +2p_6 +p_9\k_1 -p_{10} \k_1 -2p_{12}\k_2 +p_{13}\k_3 + 2p_{14}\k_4)\eta]\wedge \mu^9 +\\
 &[dp_{10} + ({-1} + 2p_5 +p_6 +p_9\k_1 -p_{10} \k_1 +2p_{11}\k_2 +p_{13}\k_3 + 2p_{14}\k_4)\eta]\wedge \mu^{10} +\\
 &[dp_{11} + (p_8 +p_9\k_2 -p_{10} \k_2 +2p_{11}\k_1)\eta]\wedge \mu^{11} +\\
 &[dp_{12} + (p_7 +p_9\k_2 -p_{10} \k_2 -2p_{12}\k_1)\eta]\wedge \mu^{12} +\\
 &[dp_{13} + (-p_7 +p_8)\eta]\wedge \mu^{13} + [dp_{14} + (p_9 +p_{10})\eta]\wedge \mu^{14} +\\
 &(p_9 -p_{10})\eta\wedge  \pi^1 + (p_{11} -p_{12})\eta\wedge  \pi^2 {+ p_{13}} \eta \wedge\pi^3 + p_{14}\eta\wedge \pi^4,
  \end{split}
\end{equation*}
where `$\equiv$' denotes congruence modulo the span of $\{\mu^a\wedge\mu^b\}_{a,b = 1,\dots,14}$.
%
%\vskip0.1cm

Let $(\mathcal{C}(d\zeta), \eta)$ be the Cartan system of the
%$\mathcal{C}(d\zeta)\subset T^*(\mathcal{Z})$ determined by the
2-form $d\zeta$, i.e., the Pfaffian system on $\mathcal{Z}$ generated by the 1-forms
$\left\{X {\lrcorner}  \,d\zeta \mid X \in \mathfrak{X}(\mathcal{Z}) \right\}\subset \Omega^1(\mathcal{Z})$,
%\[
% \left\{X {\lrcorner}  \,d\zeta \mid X \in \mathfrak{X}(\mathcal{Z}) \right\}\subset \Omega^1(\mathcal{Z}),
%  \]
with independence condition $\eta \neq 0$.

%\vskip0.1cm

By Part 1, we know that generic T-curves are in one-to-one correspondence with the integral curves of the 
Pfaffian system $(\mathcal{A},\eta)$. 

Now, let $\mathfrak{a} : J \to M$ be the prolonged frame associated to a generic T-curve 
$\gamma : J \to \Lambda$ parametrized by Lie arclength.
By Griffiths' approach to the calculus of variations (see, for instance, \cite{Bryant1987, GM, Gr, Hsu}),
if $\mathfrak{a}$ 
%of the generic transversal curve $\gamma : J \to \S$
%parametrized by the natural parameter, 
admits a lift $\mathfrak{z}: J \to \mathcal{Z}$ to the phase space
$\mathcal{Z}$ which is an integral curve of the Cartan system $(\mathcal{C}(d\zeta), \eta)$,
it follows that $\gamma$ is a critical curve of $\mathcal{L}$, with respect to compactly supported variations.

%As observed by
%According to
In \cite[p. 70]{Bryant1987} (see also \cite{Hsu}), it is observed that if the derived systems 
of $(\mathcal{A},\eta)$ have all 
constant rank, as in the case under consideration
%the case under discussion 
(see Remark \ref{r:derived-flag}), then the converse is also true,
%then the converse is also true, 
that is,
all critical curves are projections of integral curves of the Cartan 
system $(\mathcal{C}(d\zeta), \eta)$.
%that is,
If $\mathfrak{z} : J \to \mathcal{Z}$ is a curve in the phase space which is an integral curve
of the Cartan system $(\mathcal{C}(d\zeta), \eta)$  and if $\mathtt{pr} : \mathcal{Z} \to \Lambda$
is the natural projection of $\mathcal{Z}$ onto $\Lambda$, then $\gamma= \mathtt{pr}\circ \mathfrak{z} : J \to \Lambda$
is a critical curve of $\mathcal{L}$ with respect to compactly supported variations. All critical curves 
%of $\mathcal{W}$ 
arise in this way.

\vskip0.1cm
To compute the Cartan system $(\mathcal{C}(d\zeta), \eta)$,
we take $(\mu^a,\eta,\pi^\ell, dp_a)$ as a coframe on $\mathcal{Z}$
%and compute the Cartan system $(\mathcal{C}(d\zeta), \eta)$ by
%
and contract $d\zeta$ with $\{{\partial}/{\partial{ p_a}}, {\partial}/{\partial{ \mu^a}},
   {\partial}/{\partial{ \pi^\ell}},{\partial}/{\partial\eta}\}$, respectively.
%$$
% \Big(\frac{\partial}{\partial{ \mu^a}},\frac{\partial}{\partial\eta},
%   \frac{\partial}{\partial{ \pi^\ell}},\frac{\partial}{\partial{ p_a}}\Big)
%    $$
%on $\mathcal{Z}$, dual to the coframe $(\mu^a,\eta,\pi^\ell,dp_a)$,
%
%the integral curves of the Cartan system $(\mathcal{C}(d\zeta), \eta)$ must satisfy the following equations:
Thus the Cartan system on $\mathcal{Z}$ is given by the Pfaffian equations

\begin{equation}\label{cartan-sys-eqs1}
 \mu^a =0, \quad a = 1, \dots, 14,
\end{equation}
\begin{eqnarray}
%\begin{split}
%  d\zeta & =    d\eta  +  \sum_{a=1}^{14}dp_a\wedge \mu^a +\sum_{a=1}^{14}p_a d\mu^a \equiv \\
 % - \eta \wedge (\mu^9 +\mu^{10})  %inseriti nei termini \mu^9 e \mu^10
 &&dp_1 + (p_1\k_1 -p_2\k_2 -p_3\k_3 +p_5 \k_4 -p_{11} - p_{13})\eta=0 \label{cs1},\\
 &&dp_2 + (p_1\k_2 -p_2\k_1 -p_4\k_3 -p_7 \k_4 -p_{10})\eta=0\label{cs2},\\
 &&dp_3 + (p_1\k_3 +p_3\k_1 -p_4\k_2 +p_8 \k_4 -p_{9})\eta=0\label{cs3},\\
 &&dp_4 + (p_2\k_3 +p_3\k_2 -p_4\k_1 -p_6 \k_4 -p_{12}+p_{13})\eta=0\label{cs4},\\
 &&dp_5 + (p_1 +p_5\k_1 +p_7\k_2 -p_8\k_3 )\eta=0\label{cs5},\\
& &dp_6 + (-p_4 - p_6\k_1 +p_7\k_3 -p_8\k_2)\eta=0\label{cs6},\\
 &&dp_7 + (-p_2  -p_5\k_2 -p_6\k_3 -p_7 \k_1 + p_{14})\eta=0\label{cs7},\\
 &&dp_8 + (p_3 +p_5\k_3 +p_6\k_2 +p_8 \k_1 {-p_{14}})\eta=0\label{cs8},\\
 &&dp_9 + ({-1} +p_5 +2p_6 +p_9\k_1 -p_{10} \k_1 -2p_{12}\k_2 +p_{13}\k_3 + 2p_{14}\k_4)\eta=0\label{cs9},\\
& &dp_{10} + ({-1} + 2p_5 +p_6 +p_9\k_1 -p_{10} \k_1 +2p_{11}\k_2 +p_{13}\k_3 + 2p_{14}\k_4)\eta=0\label{cs10},\\
& &dp_{11} + (p_8 +p_9\k_2 -p_{10} \k_2 +2p_{11}\k_1)\eta=0\label{cs11},\\
& &dp_{12} + (p_7 +p_9\k_2 -p_{10} \k_2 -2p_{12}\k_1)\eta=0\label{cs12},\\
%& & p_{9}=p_{10}=p_{13}=p_{14}=0, \quad p_7 = p_8, \quad p_{11}=p_{12}. \label{cs13}
%\\
%\end{eqnarray}
%\begin{eqnarray}
%%%% in alternativa
& &dp_{13} + (-p_7 +p_8)\eta=0 \label{cs13},\\
& &dp_{14} + (p_9 +p_{10})\eta=0\label{cs14},\\
& &(p_9 -p_{10})\eta =0    \label{cs15},\\ %\wedge  \pi^1 
&& (p_{11} -p_{12})\eta=0 \label{cs16},\\    %\wedge  \pi^2 
&& {p_{13}} \eta =0 \label{cs17},\\% \wedge\pi^3 + 
&& p_{14}\eta = 0  \label{cs18}, \\%\wedge \pi^4,
&&(p_9 -p_{10})  \pi^1 + (p_{11} -p_{12})  \pi^2 {+ p_{13}} \pi^3 + p_{14}  \pi^4 =0 \label{cs19},
%&&\eta \neq 0
%  \end{split}
\end{eqnarray}
subject to the independence condition %$\eta \neq 0$.
\begin{equation}
\eta \neq 0.
\end{equation}

%\vskip0.1cm
We are now in a position to prove that a generic T-curve $\gamma$ is critical 
%for $\mathcal{W}$ 
if and only if the Lie curvatures of $\gamma$ are constant and satisfy \eqref{conds}. 
%The proof of the first statement of Theorem \ref{thmA} 
This amounts to proving that
%by proving that
%Next we prove that 
a curve $\mathfrak{z} : J \to \mathcal{Z}$ is an integral curve of the Cartan system $(\mathcal{C}(d\zeta), \eta)$
if and only if the Lie curvatures of the projection $\gamma= \mathtt{pr}\circ \mathfrak{z}$ are constant and satisfy \eqref{conds}.

%the conditions \eqref{conds}. 
Suppose $\mathfrak{z} : J \to \mathcal{Z}$,
\[
\mathfrak{z}(s) = \left([A(s)], \k_1(s), \k_2(s), \k_3(s), \k_4(s), p_1(s), \dots, p_{14}(s)\right),
\]
is an integral curve of the Cartan system $(\mathcal{C}(d\zeta), \eta)$, 
%with $\eta = ds$,
where $A: J \to G$ is a smooth map, $\k_1, \k_2, \k_3, \k_4, p_1, \dots, p_{14}$ are smooth functions,
and $ds = \eta$.
%is an integral curve of the Cartan system $(\mathcal{C}(d\zeta), \eta)$, with $\eta = ds$. 
From \eqref{cartan-sys-eqs1} it follows that
\[
\mathfrak{z}^*(\mu^a) = 0 \quad (a = 1, \dots, 14), \quad \mathfrak{z}^*(\eta) \neq 0,
  \]
which implies that $\left([A], \k_1, \k_2, \k_3, \k_4\right)$ is an integral curve of the Pfaffian system $(\mathcal{A}, \eta)$.
Hence the projection $\gamma= \mathtt{pr}\circ \mathfrak{z}$,
\[
 \gamma(s)= [A_0(s), A_1(s)],
  \]
  is a generic T-curve in $\Lambda$, with Lie curvatures $\k_1, \k_2, \k_3, \k_4$, 
  and $A$ is a fourth order Lie frame field along $\gamma$.
Next, 
%subject to the independence condition $\mathfrak{z}(\eta)\neq 0$, 
equations \eqref{cs13}--\eqref{cs19}, together with the independence condition $\mathfrak{z}^*(\eta)\neq 0$, give
\begin{equation}\label{consq-cs1}
  p_{9}=p_{10}=p_{13}=p_{14}=0, \quad p_7 = p_8, \quad p_{11}=p_{12}. 
   \end{equation}
Substitution of \eqref{consq-cs1} into \eqref{cs11} and \eqref{cs12} together with the condition $\kappa_1\neq 0$ gives
\begin{equation}\label{consq-cs2} %\label{cs14}
  p_{11} =p_{12}= p_7 = p_8 = 0.
   \end{equation}
Substitution of \eqref{consq-cs1} and \eqref{consq-cs2} into\eqref{cs9} and \eqref{cs10} yields
\begin{equation}\label{consq-cs3}  %\label{cs15}
  p_5 = p_6 = \frac13.
   \end{equation}
From \eqref{cs7} and \eqref{cs8}, we then obtain
\begin{equation}\label{consq-cs3.1} %\label{cs16}
  p_2 = -\frac13(\k_2 +\k_3), \quad  p_3 = -\frac13(\k_2 +\k_3).
   \end{equation}
Now, \eqref{cs5} and \eqref{cs6} imply
 \begin{equation}\label{consq-cs4} %\label{cs16}
    p_1 = p_4 = -\frac13\k_1.
    \end{equation}
From \eqref{cs1} and \eqref{cs4}, we then obtain
\begin{equation}\label{consq-cs5}
  \k_4  = \k_1^2 - (\k_2 + \k_3)^2, \quad \k_1= \mathrm{constant} \neq 0.
    \end{equation}
Finally, \eqref{cs2} and \eqref{cs3} together with the condition $\k_1\neq 0$ give
\[
  \k_3 =0, \quad \k_2 = \mathrm{constant}.
   \]
%which implies that $\k_2$ is a constant function, 
Thus the Lie curvatures of $\gamma$ are constant and satisfy \eqref{conds}, as claimed.

\vskip0.1cm
Conversely, let $\gamma :J \to \Lambda$ be a generic T-curve whose Lie curvatures
are constant and satisfy \eqref{conds} and let $A$ be a fourth order Lie frame field along $\gamma$. Then
\[
\mathfrak{z}(s) = 
\Big([A(s)], \k_1, \k_2, 0, \k_1^2 -\k_2^2, 
-\frac{\k_1}{3},-\frac{\k_2}{3} ,-\frac{\k_2}{3} , -\frac{\k_1}{3}, \frac13, \frac13,0,0,0,0,0,0,0,0\Big)
\]
is a lift of $\gamma$ to the phase space $\mathcal{Z}$. By construction, $\mathfrak{z}$
%such that $\mathfrak{z}^*(\eta)\neq 0$ and by construction $\mathfrak{z}$ 
is an integral curve of the Cartan system $(\mathcal{C}(d\zeta), \eta)$.

\vskip0.2cm
%\noindent \textbf{Step 3.}  
\noindent \textbf{Part 3.}  
Finally, we provide 
%prove the second statement of Theorem \ref{thmA} about
the explicit integration of the critical curves of $\mathcal{L}$.

\vskip0.1cm
Let $\gamma$ be a critical curve of $\mathcal{L}$, let $\mathfrak{A}$ be the canonical
frame field along $\gamma$, and let $s$ be the Lie arclength of $\gamma$. From the first part of the proof, we have
\begin{equation}\label{eq-for-CF}
\mathfrak{A}^{-1} d\mathfrak{A} = X(u,v)ds, 
\end{equation}
where $X(u,v)$ is given by \eqref{X(a1,a2)} and $u$, $v\in \R$ satisfy the conditions \eqref{conds}.
It follows from \eqref{eq-for-CF} that $\mathfrak{A}(s) = A \exp [X(u,v) s]$, for some $A\in G$, and hence
$\gamma$ is determined by the 1-parameter subgroup $\exp [X(u,v) s]$, up to the left multiplication by an element $A\in G$.

Conversely, if $\gamma(s) = A \exp [X(u,v) s] \cdot [e_0,e_1]$, then $\mathfrak{A}(s) = [A \exp [X(u,v) s]]$
defines a canonical frame field along $\gamma$. By the discussion in the first part of the proof, this 
implies that $\gamma$ is a critical curve of $\mathcal{L}$.
\end{proof}

\begin{remark}
The Cartan system $(\mathcal{C}(d\zeta), \eta)$ 
can be reduced, in the sense that
%is reducible, meaning that
%According to Griffiths \cite{GM,Gr},
%the
there is a 
%According to Griffiths \cite{GM,Gr}, the 
nonempty submanifold
$\mathcal{Y} \subseteq \mathcal{Z}$,
%called the reduced space, 
such that: (1) at each point of $\mathcal{Y}$ 
there exists an integral element
of $(\mathcal{C}(d\zeta), \eta)$ tangent to $\mathcal{Y}$; and (2) if $\mathcal{X} \subseteq \mathcal{Z}$ is any other
submanifold with the same property of $\mathcal{Y}$, then $\mathcal{X} \subseteq\mathcal{Y}$.
Following Griffiths \cite{GM,Gr}, the submanifold $\mathcal{Y}$ 
is called the \textit{momentum space} 
%of the variational problem.
%
%In addition, 
and the restriction $(\mathcal{J}, \eta)$ to $\mathcal{Y}$ of the Cartan system $(\mathcal{C}(d\zeta), \eta)$ 
is the \textit{Euler--Lagrange system} of the variational problem. 
By the above discussion, %the momentum space 
$\mathcal{Y}$ 
is the $19$-dimensional submanifold of $\mathcal{Z} \cong M\times \R^{14}$,
$M = [G]\times \R^4$, defined by the
following set of equations
\begin{eqnarray*}
&&p_7 = p_8= p_{9}=p_{10}= p_{11}=p_{12}=p_{13}=p_{14}=0, \\
&&p_1 = p_4 = -\frac13\k_1,
\quad p_2 = p_3 = -\frac13(\k_2+\k_3),
\quad p_5 = p_6 = \frac13,
\end{eqnarray*}
where, as before, $\k_1,\dots,\k_4$ are the coordinates of $\R^4$ and $p_1,\dots,p_{14}$ are the coordinates of $\R^{14}$.
The Euler--Lagrange system $(\mathcal{J}, \eta)$ 
%on the momentum space $\mathcal{Y}$ 
is given 
%on $\mathcal{Y}$ 
by the Pfaffian equations
\begin{eqnarray*}
&& \mu^a =0, \quad a = 1, \dots, 14,\\
&&d\k_1 - \left( \k_4 -\k_1^2 +(\k_2 +\k_3)^2 \right)\eta =0, \\
&&d(\k_2 +\k_3) = 0, \quad d(\k_2 +\k_3)  + 2\k_1\k_3 \eta =0, \\
 &&d\k_1 + \left( \k_4 -\k_1^2 +(\k_2 +\k_3)^2  \right)\eta =0,
\end{eqnarray*}
with the independence condition $\eta\neq 0$.
%
%A basic result states that 
It is known that
the Pfaffian systems $(\mathcal{C}(d\zeta), \eta)$ and $(\mathcal{J}, \eta)$ have
the same integral curves \cite{GM,Gr}.
%
%The momentum space $\mathcal{Y}$ is the 
%

\end{remark}

\subsection{Open questions and further developments}
{A critical curve 
%(which has constant curvature) 
is congruent to
%$\gamma(s) =\exp[sX(u,v)]\cdot \lambda_0$, 
the orbit through $\lambda_0 =[e_0,e_1]$ of the 1-parameter group of
Lie sphere transformations $\mathbb{R} \ni s \mapsto \exp(sX) \in G$,
where $X(u,v)$ is a fixed element of $\mathfrak{g}$ (cf. \eqref{X(a1,a2)}). 
Thus, the explicit determination of the critical curves reduces to the computation
of the exponentials of the matrices $s X(u,v)$.
From a computational point of view, this
requires a detailed analysis of the possible orbit types of the infinitesimal generator $X(u,v)$.
In this respect, we recall that
a closed, finite formula for the exponential map
from the Lie algebra to the identity component of
the pseudo-orthogonal group $\mathrm{SO}(2,4)$ is given in \cite{Barut}.
However, 
the parametrizations obtained in this way lack in general a direct geometric interpretation
suitable for the description of critical curve. 
This involves some nontrivial additional work.
Another interesting question to address is that of the existence and determination of closed critical curves. 
We observe that the trajectory of a critical curve can be closed if and only if the characteristic 
polynomial of the infinitesimal generators $X(u,v)$ has purely imaginary roots.
%
%comments on the eigenvalues
It is our intention to address the above questions in future work.}

\bibliographystyle{amsalpha}

\end{document}